\newtheorem{theorem}{Theorem}[section]
\newtheorem{lemma}[theorem]{Lemma}
\newtheorem{conj}{Conjecture}
\newtheorem{prob}{Problem}
\theoremstyle{definition}
\newtheorem{defn}{Definition}[section]
\newtheorem{example}{Example}
\newcounter{alphabet}
\def\be{\begin{equation}}
\def\ee{\end{equation}}
\newcommand{\bee}{\begin{enumerate}}
\newcommand{\eee}{\end{enumerate}}
\newcommand{\blem}{\begin{lem}}
\newcommand{\elem}{\end{lem}}
\newcommand{\bthm}{\begin{thm}}
\newcommand{\ethm}{\end{thm}}
\newcommand{\bcor}{\begin{cor}}
\newcommand{\ecor}{\end{cor}}
\newcommand{\beg}{\begin{example}}
\newcommand{\eeg}{\end{example}}
\newcommand{\begs}{\begin{examples}}
\newcommand{\eegs}{\end{examples}}
\newcommand{\bdefe}{\begin{defn}}
\newcommand{\edefe}{\end{defn}}
\newcommand{\bprob}{\begin{prob}}
\newcommand{\eprob}{\end{prob}}
\newcommand{\bques}{\begin{ques}}
\newcommand{\eques}{\end{ques}}
\newcommand{\bei}{\begin{itemize}}
\newcommand{\eei}{\end{itemize}}
\newcommand{\bcon}{\begin{conj}}
\newcommand{\econ}{\end{conj}}
\newcommand{\bcons}{\begin{conjs}}
\newcommand{\econs}{\end{conjs}}
\newcommand{\bprop}{\begin{propo}}
\newcommand{\eprop}{\end{propo}}
\newcommand{\br}{\begin{rem}}
\newcommand{\er}{\end{rem}}
\newcommand{\brs}{\begin{rems}}
\newcommand{\ers}{\end{rems}}
\newcommand{\bo}{\begin{obser}}
\newcommand{\eo}{\end{obser}}
\newcommand{\bos}{\begin{obsers}}
\newcommand{\eos}{\end{obsers}}
\newcommand{\bpf}{\begin{proof}}
\newcommand{\epf}{\end{proof}}
\newcommand{\ba}{\begin{array}}
\newcommand{\ea}{\end{array}}
\newcommand{\beq}{\begin{align}}
\newcommand{\beqq}{\begin{align*}}
\newcommand{\eeq}{\end{align}}
\newcommand{\eeqq}{\end{align*}}
\newcounter{minutes}\setcounter{minutes}{\time}
\newcounter{hours}\setcounter{hours}{\time}
\begin{document}
\bibliographystyle{amsplain}

\title[]%
{Generalize Hilbert operator acting on Dirichlet spaces}

\author{L.Y. Zhao Z.Y. Wang* and Z.R. Su
}
\begin{abstract}
Let  $\mu$ be a positive Borel measure on the interval $[0,1)$. For $\gamma>0$, the Hankel matrix $\mathcal{H}_{\mu,\gamma}=(\mu_{n,k})_{n,k\geq0}$ with entries $\mu_{n,k}=\mu_{n+k}$, where $\mu_{n+k}=\int_{0}^{\infty}t^{n+k}d\mu(t)$. formally induces the operator $$\mathcal{H}_{\mu,\gamma}=\sum_{n=0}^{\infty}\left(\sum_{k=0}^{\infty}\mu_{n,k}a_k\right)\frac{\Gamma(n+\gamma)}{n!\Gamma(\gamma)}z^n,$$
on the space of all analytic functions $f(z)=\sum_{k=0}^{\infty}{a_k}{z^k}$ in the unit disc $\mathbb{D}$. Following  ideas from \cite{author3} and \cite{author4}, in this paper, for $0\leq\alpha<2$, $2\leq\beta<4$, $\gamma\geq1$, We characterize the measure $\mu$ for which $\mathcal{H}_{\mu,\gamma}$ is bounded(resp.,compact)from $\mathcal{D}_{\alpha}$ into $\mathcal{D}_{\beta}$.
\\\hspace*{\fill}\\
\textit{Keywords:}
Generalized Hilbert operator,  Dirichlet spaces, Carleson measure
\end{abstract}

\thanks{\ \ *Corresponding author. }
\thanks{\ \ \ \textit{Email addresses:}\texttt{1914407155@qq.com}(L.Y. Zhao),\ \
\texttt{zywang@gdut.edu.cn}(Z.Y. Wang),\\ \texttt{1131258896@qq.com}(Z.R. Su).\\
\textit{Addresses:Department of Mathematics and Statistics,Guangdong University of Technology,
510520 Guangzhou, Guangdong, P. R. China.}}

\maketitle

\section{Introduction}
Let $\mathbb{D}=\{z\in\mathbb{C}:|z|<1\}$ denote the open unit  disc in the complex plane $\mathbb{C}$ and let ${H}(\mathbb{D})$ be the space of all analytic functions in $\mathbb{D}$. For $f\in H (\mathbb{D})$ and $0<r<1$, the integral mean $M_p(r,f)$ is defined by $$M_p(r,f)=\left\{\frac{1}{2\pi}\int_{0}^{2\pi}|f(re^{i\theta})|^pd\theta\right\}^{\frac{1}{p}}.$$
For $0<p<\infty$, the Hardy space $H^p$ is the set of $f\in H(\mathbb{D})$ with
$$\|f\|_{H^p}=\sup_{0<r<1}M_p(r,f)<\infty.$$
See  more detail information about Hardy space spaces in \cite{author1}.
The Dirichlet space $\mathcal{D}_{\alpha}$, $\alpha\in R$, consists of functions $f(z)=\sum_{n=0}^{\infty}a_nz^n\in H(\mathbb{D})$ for which
$$\|f\|_{\mathcal{D}_{\alpha}}^2=\sum_{n=0}^{\infty}(n+1)^{1-\alpha}|a_n|^2<\infty.$$
When $\alpha=0$, $\mathcal{D}_{0}$ is the classical Dirichlet space $\mathcal{D}$, When When $\alpha=1$, $\mathcal{D}_{1}$ is the Hardy space $H^2$. See  more detail theory about Dirichlet space spaces in \cite{author1,author8}.\\

\indent
For an arc ${I}\subseteq\partial\mathbb{D}$, let $|I|=\frac{1}{2\pi}\int_{I}|d\xi|$ be the normalized length of ${I}$ and ${S}({I})$ be the Carleson square based on $I$ with
$$ {S}({I})=\{z=re^{it}:e^{it}\in {I};1-|I|\leq r<1\}.$$
Clearly, if ${I}=\partial\mathbb{D}$, then ${S}({I})=\mathbb{D}$.\\
\indent
For $0<s<\infty$, we have that if a positive Borel measure on $\mathbb{D}$ satisfies
$$\sup_{I\subset\partial\mathbb{D}}\frac{\mu(S(I))}{|I|^s}<\infty,$$ then it become a $s$-Carleson measure(See \cite{author9}).
If $s=1$, the $1$-Carleson measure is the classical Carleson measure. Suppose that $\mu$ is a positive Borel measure on $\mathbb{D}$, if the following equation
$$\lim_{|I|\to 0}\frac{\mu(S(I))}{|I|^s}=0$$ is true, then
$\mu$ is a vanishing $s$-Carleson measure.
If $s=1$, the vanishing $1$-Carleson measure is the vanishing  Carleson measure.\\
 \indent
We see a positive Borel measure $\mu$ on $[0,1)$  as a Borel measure on $\mathbb{D}$
by identifying it with the measure $\tilde{\mu}$ which  defined by
$$\tilde{\mu}(\textit E)=\mu(\textit E\cap[0,1)),$$
for any Borel subset $E$ of $\mathbb{D}$. Then a positive Borel measure $\mu$ on $[0,1)$
can be seen as an $s$-Carleson measure on $\mathbb{D}$, if
$$\sup_{t\in[0,1)} \frac{\mu([t,1))}{(1-t)^s}<\infty.$$
For vanishing $s$-Carleson measure, we have similar definition.\\
\indent
If $\mu$ is a positive Borel measure on the interval $[0,1)$,
we let $\mathcal{H}_{\mu}$ be the  Hankel matrix $\mathcal{H}_{\mu}=(\mu_{n,k})_{n,k\geq0}$
with entries $\mu_{n,k}=\mu_{n+k}$ and $\mu_n=\int_{[0,1)}t^{n}d\mu(t)$.

\indent
For any analytic functions $f(z)=\sum_{n=0}^{\infty}a_nz^n$.
In \cite{author4,author5}, Ye and Zhou first defined the Derivative-Hilbert operator  as
$$\mathcal{DH}_{\mu}(f)(z)=\sum_{n=0}^{\infty}\left(\sum_{k=0}^{\infty}\mu_{n,k}a_k\right)(n+1)z^n,\ z\in \mathbb{D},$$
on the space of analytic functions in $\mathbb{D}$.
In \cite{author2}, for $\gamma>0$, Ye and Zhou first defined the generalized Hilbert operator $\mathcal{H}_{\mu,\gamma}$ as
$$\mathcal{H}_{\mu,\gamma}(f)(z)=\sum_{n=0}^{\infty}\left(\sum_{k=0}^{\infty}\mu_{n,k}a_k\right)\frac{\Gamma(n+\gamma)}{n!\Gamma(\gamma)}z^n,\ z\in \mathbb{D},$$
on the space of analytic functions in $\mathbb{D}$.
It is clear that when $\gamma=2$, $\mathcal{H}_{\mu,\gamma}=\mathcal{DH}_{\mu}.$\\
\indent
For $0<\alpha<2$, $2\leq\beta<4$, in \cite{author3}, Xu characterized the measure $\mu$ for which $\mathcal{DH}_{\mu}$ is bounded(resp.,compact)from $\mathcal{D}_{\alpha}$ into $\mathcal{D}_{\beta}$. In this paper, We find using the same ways in \cite{author3}, we obtain the necessary and sufficient condition of $\mu$ such that $\mathcal{H}_{\mu,\gamma}$ is bounded(resp.,compact)from $\mathcal{D}_{\alpha}$ into $\mathcal{D}_{\beta}$.\\
Notation. Throughout this paper, $C$ denotes a positive constant which may be different from one occurrence to the next. The symbol $E_1\preceq E_2$ means that there exists a positive constant $C$ such that $E_1\leq C E_2$. If $E_1\preceq E_2$ and $E_2\preceq E_1$  set up at the same time, we denote by $E_1\asymp E_2$.
\section{main results}
\begin{lemma}\label{2.1}
Suppose that $\alpha\in \mathbb{R}$, $\gamma>0$ and let $\mu$ be a positive Borel measure on $[0,1)$. If $\mu_n=\mathcal{O}(n^{-(\frac{\alpha}{2}+\varepsilon)})$ is hold for some $\varepsilon>0$, then $\mathcal{H}_{\mu,\gamma}$ is well defined on $\mathcal{D}_{\alpha}$.
\end{lemma}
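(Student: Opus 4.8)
The plan is to show that for every $f(z)=\sum_{k=0}^{\infty}a_kz^{k}\in\mathcal{D}_{\alpha}$ the coefficients $b_n:=\sum_{k=0}^{\infty}\mu_{n,k}a_k$ of $\mathcal{H}_{\mu,\gamma}(f)$ are given by absolutely convergent series, that the sequence $(b_n)_{n\ge0}$ is bounded, and hence that the power series $\sum_{n=0}^{\infty}b_n\frac{\Gamma(n+\gamma)}{n!\Gamma(\gamma)}z^{n}$ defines a function in $H(\mathbb{D})$ --- which is precisely the assertion that $\mathcal{H}_{\mu,\gamma}$ is well defined on $\mathcal{D}_{\alpha}$.

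First I would record the elementary monotonicity $\mu_{n,k}=\mu_{n+k}=\int_{[0,1)}t^{n+k}\,d\mu(t)\le\int_{[0,1)}t^{k}\,d\mu(t)=\mu_k$ for all $n,k\ge0$, which holds because $0\le t<1$. Combining this with the Cauchy--Schwarz inequality and the definition of $\|\cdot\|_{\mathcal{D}_{\alpha}}$ yields
\begin{align*}
\sum_{k=0}^{\infty}\mu_{n,k}|a_k|&\le\sum_{k=0}^{\infty}\mu_k|a_k|\le\Big(\sum_{k=0}^{\infty}(k+1)^{1-\alpha}|a_k|^{2}\Big)^{1/2}\Big(\sum_{k=0}^{\infty}(k+1)^{\alpha-1}\mu_k^{2}\Big)^{1/2}\\
&=\|f\|_{\mathcal{D}_{\alpha}}\Big(\sum_{k=0}^{\infty}(k+1)^{\alpha-1}\mu_k^{2}\Big)^{1/2}.
\end{align*}
Now the hypothesis $\mu_k=\mathcal{O}(k^{-(\alpha/2+\varepsilon)})$ gives $(k+1)^{\alpha-1}\mu_k^{2}=\mathcal{O}(k^{\alpha-1}k^{-\alpha-2\varepsilon})=\mathcal{O}(k^{-1-2\varepsilon})$, so $A:=\sum_{k=0}^{\infty}(k+1)^{\alpha-1}\mu_k^{2}<\infty$. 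It follows that each $b_n$ converges absolutely and that $|b_n|\le\sqrt{A}\,\|f\|_{\mathcal{D}_{\alpha}}$ for every $n$.

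To conclude, note that $\frac{\Gamma(n+\gamma)}{n!\Gamma(\gamma)}\asymp(n+1)^{\gamma-1}$ by Stirling's formula, so the Taylor coefficients $b_n\frac{\Gamma(n+\gamma)}{n!\Gamma(\gamma)}$ of $\mathcal{H}_{\mu,\gamma}(f)$ grow at most polynomially in $n$; a power series with such coefficients has radius of convergence at least $1$, hence $\mathcal{H}_{\mu,\gamma}(f)\in H(\mathbb{D})$. As a useful by-product of the bound $\int_{[0,1)}|f(t)|\,d\mu(t)\le\sum_{k=0}^{\infty}|a_k|\mu_k<\infty$ established above (apply the displayed estimate with $n=0$), one may also interchange the binomial expansion $(1-tz)^{-\gamma}=\sum_{n\ge0}\frac{\Gamma(n+\gamma)}{n!\Gamma(\gamma)}(tz)^{n}$ with integration against $\mu$ to obtain the integral representation $\mathcal{H}_{\mu,\gamma}(f)(z)=\int_{[0,1)}\frac{f(t)}{(1-tz)^{\gamma}}\,d\mu(t)$ for $z\in\mathbb{D}$, which will be convenient in the later estimates.

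I do not expect a genuine obstacle in this lemma; the one place where the hypothesis is used in an essentially sharp way is the summability estimate $\sum_k(k+1)^{\alpha-1}\mu_k^{2}<\infty$, and the only other point deserving a word of care is that multiplying the $b_n$ by the polynomially growing factor $\Gamma(n+\gamma)/(n!\Gamma(\gamma))$ cannot push the radius of convergence below $1$.
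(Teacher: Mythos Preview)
Your proof is correct and in fact more complete than the paper's. The paper does not give an argument at all: it simply cites \cite{author3} for the fact that $\sum_{k\ge 0}\mu_{n,k}a_k$ converges absolutely whenever $f=\sum_k a_kz^k\in\mathcal{D}_{\alpha}$, and declares the lemma proved. You supply that argument directly via the monotonicity $\mu_{n+k}\le\mu_k$ together with Cauchy--Schwarz and the summability $\sum_k(k+1)^{\alpha-1}\mu_k^{2}<\infty$ forced by the moment hypothesis, which is presumably the same estimate appearing in the cited reference. You also go further than the paper in two respects: you explicitly check that multiplying the bounded $b_n$ by the polynomially growing factor $\Gamma(n+\gamma)/(n!\Gamma(\gamma))$ still yields a power series with radius of convergence at least $1$ (the paper leaves this implicit), and you derive the integral representation $\mathcal{H}_{\mu,\gamma}(f)(z)=\int_{[0,1)}f(t)(1-tz)^{-\gamma}\,d\mu(t)$, which is not stated in this paper but is indeed the form used in related work. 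Both additions are sound and useful.
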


\begin{proof}
In \cite{author3}, Xu proved that  if $\mu_n=\mathcal{O}(n^{-(\frac{\alpha}{2}+\epsilon)})$ is hold for some $\varepsilon>0$, then $|\sum_{n=0}^{\infty}\mu_{n,k}a_k|<\infty$ for any $f(z)=\sum_{n=0}^{\infty}a_nz^n\in \mathcal{D}_{\alpha}$, which implies that $\mathcal{H}_{\mu,\gamma}$ is well defined on $\mathcal{D}_{\alpha}$.
\end{proof}
The following lemma plays an important role in this paper(see \cite{author6}).
\begin{lemma}\label{2.2}
Let K(x,y) be a real function of two variables such that \\
 $(i)$ $K(x,y)$ is a positive and homogenous with degree $-1$.\\
 $(ii)$ $$\int_{0}^{\infty}K(x,1)x^{-\frac{1}{2}}=\int_{0}^{\infty}K(1,y)y^{-\frac{1}{2}}dy=k.$$
 $(iii)$ The functions $K(x,1)x^{-\frac{1}{2}}$ and $K(1,y)y^{-\frac{1}{2}}$ are strictly decreasing.\\
 Then for every sequence $\{a_n\}_{n\geq0}$ in $l^2$ it is $$\sum_{n=1}^{\infty}\left(\sum_{k=1}^{\infty}K(n,k)a_k\right)^2\leq k^2\sum_{n=1}^{\infty}a_n^2.$$

\end{lemma}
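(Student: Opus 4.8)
The statement to be proved is Lemma \ref{2.2}, which is Hilbert's inequality for bilinear forms with homogeneous kernel. This is a classical result (it appears in Hardy--Littlewood--Pólya, \emph{Inequalities}), and the excerpt explicitly attributes it to \cite{author6}, so the "proof" here should simply be a short verification in the spirit of the classical argument rather than anything new.

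\medskip

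\textbf{Proof proposal.} The plan is to reduce the double sum to a single sum by the Cauchy--Schwarz inequality, using the homogeneity of $K$ to produce the correct weights. First I would write, for each fixed $n\geq 1$,
\[
\sum_{k=1}^{\infty}K(n,k)a_k
=\sum_{k=1}^{\infty}\Bigl(K(n,k)^{1/2}\bigl(\tfrac{n}{k}\bigr)^{1/4}\Bigr)\Bigl(K(n,k)^{1/2}\bigl(\tfrac{k}{n}\bigr)^{1/4}a_k\Bigr),
\]
and apply Cauchy--Schwarz to get
\[
\Bigl(\sum_{k=1}^{\infty}K(n,k)a_k\Bigr)^2
\le\Bigl(\sum_{k=1}^{\infty}K(n,k)\bigl(\tfrac{n}{k}\bigr)^{1/2}\Bigr)\Bigl(\sum_{k=1}^{\infty}K(n,k)\bigl(\tfrac{k}{n}\bigr)^{1/2}a_k^2\Bigr).
\]
Then I would estimate the first factor: by homogeneity of degree $-1$, $K(n,k)=n^{-1}K(1,k/n)$, so $\sum_{k\ge1}K(n,k)(n/k)^{1/2}=\sum_{k\ge1}n^{-1}K(1,k/n)(k/n)^{-1/2}$. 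Because $K(1,y)y^{-1/2}$ is strictly decreasing in $y$ by hypothesis (iii), the sum $\sum_{k\ge1}n^{-1}K(1,k/n)(k/n)^{-1/2}$ is dominated by the integral $\int_0^\infty K(1,y)y^{-1/2}\,dy=k$ (comparing the sum of values at the points $k/n$ against the integral of the decreasing function). Hence the first factor is $\le k$.

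\medskip

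Summing over $n\geq 1$ and interchanging the order of summation in the remaining double sum,
\[
\sum_{n=1}^{\infty}\Bigl(\sum_{k=1}^{\infty}K(n,k)a_k\Bigr)^2
\le k\sum_{k=1}^{\infty}a_k^2\sum_{n=1}^{\infty}K(n,k)\bigl(\tfrac{k}{n}\bigr)^{1/2}.
\]
Now I would estimate the inner sum $\sum_{n\ge1}K(n,k)(k/n)^{1/2}$ symmetrically: using $K(n,k)=k^{-1}K(n/k,1)$ and that $K(x,1)x^{-1/2}$ is strictly decreasing, the same sum-versus-integral comparison gives $\sum_{n\ge1}K(n,k)(k/n)^{1/2}=\sum_{n\ge1}k^{-1}K(n/k,1)(n/k)^{-1/2}\le\int_0^\infty K(x,1)x^{-1/2}\,dx=k$. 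Combining the two estimates yields $\sum_{n\ge1}(\sum_{k\ge1}K(n,k)a_k)^2\le k^2\sum_{k\ge1}a_k^2$, which is the claim.

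\medskip

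The main technical point — the only place where hypotheses (ii) and (iii) are genuinely used together — is the sum-to-integral comparison: for a positive, strictly decreasing function $\varphi$ on $(0,\infty)$ one has $h\sum_{j\ge1}\varphi(jh)\le\int_0^\infty\varphi(t)\,dt$ for every step size $h>0$, since $h\,\varphi(jh)\le\int_{(j-1)h}^{jh}\varphi(t)\,dt$. Here $h=1/n$ (resp.\ $h=1/k$) and $\varphi$ is $K(1,\cdot)(\cdot)^{-1/2}$ (resp.\ $K(\cdot,1)(\cdot)^{-1/2}$), and the value of the integral is exactly the constant $k$ from hypothesis (ii). Everything else is bookkeeping: the choice of splitting exponents $\pm1/4$ in the Cauchy--Schwarz step, and Tonelli's theorem to justify interchanging the order of summation of nonnegative terms. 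Since the result is quoted from \cite{author6}, in the paper one may alternatively just cite it and omit the proof, but the argument above is the standard one.
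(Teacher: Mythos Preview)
Your argument is the classical Schur-test proof from Hardy--Littlewood--P\'olya and is correct; the paper itself gives no proof of this lemma at all, merely citing \cite{author6}. Since the paper only quotes the result, your write-up goes beyond what the authors do, but it matches exactly the source they invoke.
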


\begin{lemma}\label{2.3}
Suppose that $0<\alpha\leq2$, $2\leq\beta<4$, $\gamma\geq1$ and let $\mu$ be a positive Borel measure on $[0,1)$ which satisfies the condition in Lemma $2.1$. Then the following two conditions are
equivalent:\\
\indent $(i)$  $\mu$ is a $(\gamma-\frac{\beta-\alpha}{2})$-Carleson measure.\\
\indent $(ii)$  $\mathcal{H}_{\mu,\gamma}$ is bounded  from $\mathcal{D}_{\alpha}$ into $\mathcal{D}_{\beta}$.\\
\end{lemma}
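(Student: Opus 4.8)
The plan is to prove the two implications separately, following the template that Xu used for $\mathcal{DH}_\mu$ but tracking the role of the parameter $\gamma$ through the Gamma-factor $\tfrac{\Gamma(n+\gamma)}{n!\,\Gamma(\gamma)}\asymp (n+1)^{\gamma-1}$. For $(i)\Rightarrow(ii)$, I would first reduce to an estimate on Taylor coefficients: writing $f(z)=\sum_k a_k z^k\in\mathcal{D}_\alpha$, the function $\mathcal{H}_{\mu,\gamma}(f)$ has $n$-th coefficient $\big(\sum_k \mu_{n+k}a_k\big)\tfrac{\Gamma(n+\gamma)}{n!\,\Gamma(\gamma)}$, so that
\[
\|\mathcal{H}_{\mu,\gamma}(f)\|_{\mathcal{D}_\beta}^2 \asymp \sum_{n=0}^\infty (n+1)^{1-\beta}(n+1)^{2(\gamma-1)}\Big(\sum_{k=0}^\infty \mu_{n+k}a_k\Big)^2 .
\]
The standard device (used in \cite{author3}) is to split $\mu_{n+k}$ using the moment bound coming from the Carleson condition: if $\mu$ is a $(\gamma-\tfrac{\beta-\alpha}{2})$-Carleson measure then $\mu_{n+k}\preceq (n+k+1)^{-(\gamma-\frac{\beta-\alpha}{2})}\asymp (n+k+1)^{-\gamma+\frac{\beta-\alpha}{2}}$, and more precisely one has the two-sided control $\mu_{n+k}\preceq \int_0^1 t^{n+k}(1-t)^{-1+(\gamma-\frac{\beta-\alpha}{2})}\,dt$-type bounds that let one compare with a kernel. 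I would then set $b_k=(k+1)^{(1-\alpha)/2}a_k$, so $\{b_k\}\in \ell^2$ with $\|b\|_{\ell^2}=\|f\|_{\mathcal{D}_\alpha}$, and rewrite the inner sum as $\sum_k K(n,k) b_k$ where, after absorbing all the powers,
\[
K(n,k) \;=\; (n+1)^{\frac{1-\beta}{2}+(\gamma-1)}\,(n+k+1)^{-\gamma+\frac{\beta-\alpha}{2}}\,(k+1)^{\frac{\alpha-1}{2}} .
\]
A short computation shows $K$ is homogeneous of degree $-1$ in $(n,k)$ precisely because the exponents sum correctly: $\big(\tfrac{1-\beta}{2}+\gamma-1\big)+\big(-\gamma+\tfrac{\beta-\alpha}{2}\big)+\big(\tfrac{\alpha-1}{2}\big)=-1$. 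The hypotheses $0<\alpha\le 2$, $2\le\beta<4$, $\gamma\ge 1$ are exactly what make $K(x,1)x^{-1/2}$ and $K(1,y)y^{-1/2}$ positive, integrable and strictly decreasing, so Lemma \ref{2.2} applies and yields $\|\mathcal{H}_{\mu,\gamma}(f)\|_{\mathcal{D}_\beta}\preceq \|f\|_{\mathcal{D}_\alpha}$.

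For the converse $(ii)\Rightarrow(i)$, I would test the bounded operator on the standard family $f_t(z)=\sum_{k\ge 0}(k+1)^{(\alpha-1)/2}(k+1)^{-?}\,t^k$ — more concretely, on functions of the form $f_t(z)=(1-tz)^{-c}$ with $c$ chosen so that $f_t\in\mathcal{D}_\alpha$ with $\|f_t\|_{\mathcal{D}_\alpha}\asymp (1-t^2)^{-(c-1)+\frac{1-\alpha}{2}}$ (a Forelli–Rudin type estimate), for $t\in[0,1)$. Since $\mathcal{H}_{\mu,\gamma}$ is bounded, $\|\mathcal{H}_{\mu,\gamma}(f_t)\|_{\mathcal{D}_\beta}\preceq \|f_t\|_{\mathcal{D}_\alpha}$; on the other hand one bounds $\|\mathcal{H}_{\mu,\gamma}(f_t)\|_{\mathcal{D}_\beta}$ from below by keeping only finitely many coefficients, or by pairing against a suitable reproducing kernel, to obtain a lower bound involving $\int_0^1 \tfrac{1}{(1-st)^{c}}\,d\mu(s)\ge \mu([t,1))\,t$-type quantities. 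Comparing the two sides and letting the free exponent be chosen optimally produces $\mu([t,1))\preceq (1-t)^{\gamma-\frac{\beta-\alpha}{2}}$, i.e. the Carleson condition. I expect the main obstacle to be the converse direction: one must choose the test functions and the exponent $c$ so that (a) they genuinely lie in $\mathcal{D}_\alpha$ with sharp norm growth, and (b) the resulting lower bound on the $\mathcal{D}_\beta$-norm of the image isolates $\mu([t,1))$ cleanly rather than a smeared average of $\mu$; getting both simultaneously, uniformly in $t$, is where the constraints $0<\alpha\le 2$, $2\le\beta<4$, $\gamma\ge 1$ will again be used, and it is the step most sensitive to the precise form of the Gamma-factor. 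The forward direction, by contrast, should be essentially a bookkeeping exercise once the kernel $K$ above is identified and Lemma \ref{2.2} is invoked.
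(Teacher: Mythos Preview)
Your proposal is correct and follows the paper's argument essentially step by step. The forward direction in the paper uses exactly the kernel you identified (phrased via isometries $V_\alpha:\mathcal{D}_\alpha\to H^2$, which amounts to your substitution $b_k=(k+1)^{(1-\alpha)/2}a_k$) together with Lemma~\ref{2.2}; for the converse the paper commits to the specific test family $f_t(z)=(1-t^2)^{1-\alpha/2}(1-tz)^{-1}$, i.e.\ your $c=1$, and obtains the lower bound by restricting each moment integral $\int_0^1 x^{n+k}\,d\mu(x)$ to $[t,1)$.
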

\begin{proof}
$(i)\Rightarrow(ii)$. First, for $\alpha\in \mathbb{R}$ we defined two map $V_{\alpha}$ and $V_{\alpha}^{-1}$. \\
For $f(z)=\sum_{n=0}^{\infty}a_nz^n\in \mathcal{D}_{\alpha}$, the map $$V_{\alpha}(f)(z)=\sum_{n=0}^{\infty}(n+1)^{(1-\alpha)/2}a_nz^n,$$
Since $$\|V_{\alpha}(f)\|_{H^2}^2=\sum_{n=0}^{\infty}(n+1)^{1-\alpha}|a_n|^2=\|f\|_{\mathcal{D}_{\alpha}}^2,$$
$V_{\alpha}$ is a a bounded operator from $\mathcal{D}_{\alpha}$ into $H^2$.\\
For $g(z)=\sum_{n=0}^{\infty}b_n z^n\in H^2$, the map $$V_{\alpha}^{-1}(g)(z)=\sum_{n=0}^{\infty}(n+1)^{(1-\alpha)/2}b_nz^n,$$
Since
$$\|V_{\alpha}^{-1}(g)\|_{\mathcal{D}_{\alpha}}^2=\sum_{n=0}^{\infty}|b_n|^2=\|g\|_{H^2}^2,$$
$V_{\alpha}^{-1}$ is a a bounded operator from $H^2$ into $\mathcal{D}_{\alpha}$.\\
Suppose that $\mu$ is a $(\gamma-\frac{\beta-\alpha}{2})$-Carleson measure, from Theorem $2.1$ of \cite{author7}, we know that $$\mu_n=\mathcal{O}\left({n^{-\gamma+\frac{\beta-\alpha}{2}}}\right).\eqno{(2.1)}$$
Since $V_{\alpha}$,$V_{\alpha}^{-1}$ are bounded operators, from \cite{author8}, we know  that the generalized operator $\mathcal{H}_{\mu,\gamma}$ is bounded from $\mathcal{D}_{\alpha}$ into $\mathcal{D}_{\beta}$ if and only if the operator $V_{\beta}\circ\mathcal{H}_{\mu,\gamma}\circ V_{\alpha}^{-1}$ is bounded on Hardy space $H^2.$ \\
For $f(z)=\sum_{n=0}^{\infty}a_nz^n\in {H}^2$, we define a new operator $S_{\mu,\gamma}$ as $$S_{\mu,\gamma}(f)(z)=V_{\beta}\circ\mathcal{H}_{\mu,\gamma}\circ V_{\alpha}^{-1}(f)(z)=\sum_{n=0}^{\infty}\left(\sum_{k=0}^{\infty}(n+1)^{\frac{1-\beta}{2}}(k+1)^{\frac{\alpha-1}{2}}\mu_{n+k}a_k\right)\frac{\Gamma(n+\gamma)}{n!\Gamma(\gamma)}z^n.$$
Notice that
$$\frac{\Gamma(n+\gamma)}{n!\Gamma(\gamma)}\asymp(n+1)^{\gamma-1},\eqno{(2.2)}$$
we have
$$S_{\mu,\gamma}(f)(z)\asymp\sum_{n=0}^{\infty}\left(\sum_{k=0}^{\infty}(n+1)^{\frac{2\gamma-\beta-1}{2}}
(k+1)^{\frac{\alpha-1}{2}}\mu_{n+k}a_k\right)z^n.$$
with $(2.1)$, we have
\begin{align*}
\|S_{\mu,\gamma}(f)(z)\|_{H^2}^{2}&\asymp\sum_{n=0}^{\infty}\left|\sum_{k=0}^{\infty}(n+1)^{\frac{2\gamma-\beta-1}{2}}(k+1)^{\frac{\alpha-1}{2}}\mu_{n,k}a_k\right|^2\\
&\leq\sum_{n=0}^{\infty}\left(\sum_{k=0}^{\infty}(n+1)^{\frac{2\gamma-\beta-1}{2}}(k+1)^{\frac{\alpha-1}{2}}\mu_{n,k}|a_k|\right)^2\\
&\lesssim\sum_{n=0}^{\infty}\left(\sum_{k=0}^{\infty}(n+1)^{\frac{2\gamma-\beta-1}{2}}(k+1)^{\frac{\alpha-1}{2}}\frac{|a_k|}{(n+k+2)^{\gamma-\frac{\beta-\alpha}{2}}}\right)^2\\
&=\sum_{n=1}^{\infty}\left(\sum_{k=1}^{\infty}n^{\frac{2\gamma-\beta-1}{2}}k^{\frac{\alpha-1}{2}}\frac{|a_k|}{(n+k)^{\gamma-\frac{\beta-\alpha}{2}}}\right)^2\\
\end{align*}
Let $$K(x,y)=x^{\frac{2\gamma-\beta-1}{2}}y^{\frac{\alpha-1}{2}}\frac{1}{(x+y)^{\gamma-\frac{\beta-\alpha}{2}}},x>0,y>0.$$
Then we have
$$\int_{0}^{\infty} K(x,1)x^{-\frac{1}{2}}dx=\int_{o}^{\infty}\frac{x^{\gamma-\frac{\beta}{2}-1}}{(x+1)^{\gamma-\frac{\beta-\alpha}{2}}}dx=B(\gamma-\frac{\beta}{2},\frac{\alpha}{2}),$$
and

$$\int_{0}^{\infty} K(1,y)y^{-\frac{1}{2}}dy=\int_{o}^{\infty}\frac{y^{\frac{\alpha}{2}-1}}{(y+1)^{\gamma-\frac{\beta-\alpha}{2}}}dy=B(\frac{\alpha}{2},\gamma-\frac{\beta}{2}).$$
We know that Beta function have
symmetry, $B(\gamma-\frac{\beta}{2},\frac{\alpha}{2})=B(\frac{\alpha}{2},\gamma-\frac{\beta}{2})$.\\ Since $\int_{0}^{\infty}K(x,1)x^{-\frac{1}{2}}$ and $\int_{0}^{\infty} K(1,y)y^{-\frac{1}{2}}$ are strictly decreasing, then  Lemma $2.2$ gives that
$$\sum_{n=1}^{\infty}\left(\sum_{k=1}^{\infty}n^{\frac{2\gamma-\beta-1}{2}}k^{\frac{\alpha-1}{2}}\frac{|a_k|}{(n+k)^{\gamma-\frac{\beta-\alpha}{2}}}\right)^2\lesssim\left(B(\gamma-\frac{\beta}{2},\frac{\alpha}{2})\right)^2\|f\|_{H^2}^2,\eqno{(2.3)}$$
which implies that the operator $S_{\mu,\gamma}$ is bounded on $H^2$,
then $\mathcal{H}_{\mu,\gamma}$ is bounded from $\mathcal{D}_{\alpha}$ into $\mathcal{D}_{\beta}$.\\
$(ii)\Rightarrow(i)$. For $0<t<1$, let $f_t(z)=(1-t^2)^{1-\frac{\alpha}{2}}\sum_{n=0}^{\infty}t^nz^n$. Since $$\|f_t\|_{\mathcal{D}_{\alpha}}^2=(1-t^2)^{2-\alpha}\sum_{n=0}^{\infty}(n+1)^{1-\alpha}t^{2n}\asymp1$$
Using $(2.2)$, we have
\begin{align*}
\|\mathcal{H}_{\mu,\gamma}(f_t)\|_{\mathcal{D}_{\beta}}^{2}
&=\sum_{n=0}^{\infty}(n+1)^{1-\beta}\left(\sum_{k=0}^{\infty}\mu_{n+k}(1-t^2)^{1-\frac{\alpha}{2}}\frac{\Gamma(n+\gamma)}{n!\Gamma(\gamma)}\right)^2\\
&\asymp\sum_{n=0}^{\infty}(n+1)^{1-\beta}\left(\sum_{k=0}^{\infty}(n+1)^{\gamma-1}\mu_{n+k}(1-t^2)^{1-\frac{\alpha}{2}}\right)^2\\
&\succeq(1-t^2)^{2-\alpha}\sum_{n=0}^{\infty}(n+1)^{2\gamma-\beta-1}\left(\sum_{k=0}^{\infty}t^k\int_{t}^1x^{n+k}d\mu(x)\right)^2\\
&\succeq(1-t^2)^{2-\alpha}\sum_{n=0}^{\infty}(n+1)^{2\gamma-\beta-1}\left(\sum_{k=0}^{\infty}t^{n+2k}\mu([t,1))\right)^2
\end{align*}
Since $\mathcal{H}_{\mu,\gamma}$ is bounded from $\mathcal{D}_{\alpha}$ into $\mathcal{D}_{\beta}$, we have that
\begin{align*}
\|\mathcal{H}_{\mu,\gamma}\|_{\mathcal{D}_{\alpha}\to\mathcal{D}_{\beta}}^2\|f_t\|_{\mathcal{D}_{\alpha}}^2&\geq\|\mathcal{H}_{\mu,\gamma}(f_t)\|_{{\mathcal D}_{\beta}}^2\\
&\succeq(1-t^2)^{2-\alpha}\sum_{n=0}^{\infty}(n+1)^{2\gamma-\beta-1}\left(\sum_{k=0}^{\infty}t^{n+2k}\mu([t,1))\right)^2\\
&\succeq(1-t^2)^{2-\alpha}\sum_{n=0}^{\infty}(n+1)^{2\gamma-\beta+1}t^{6n}\left(\mu([t,1))\right)^2\\
&\asymp\frac{(\mu([t,1))^2}{\ \ \ \ \ \ \ (1-t^2)^{2\gamma-\beta+\alpha}}.
\end{align*}
which implies $$\mu([t,1))\preceq(1-t^2)^{\gamma-\frac{\beta-\alpha}{2}},$$
hence $\mu$ is a $(\gamma-\frac{\beta-\alpha}{2})$-Carleson measure.
\end{proof}
\begin{lemma}\label{2.4}
Suppose that $0<\alpha\leq2$, $2\leq\beta<4$, $\gamma\geq1$ and let $\mu$ be a positive Borel measure on $[0,1)$ which satisfies the condition in Lemma $2.1$. Then the following two conditions are
equivalent:\\
\indent $(i)$  $\mu$ is a vanishing $(\gamma-\frac{\beta-\alpha}{2})$-Carleson measure.\\
\indent $(ii)$  $\mathcal{H}_{\mu,\gamma}$ is compact  from $\mathcal{D}_{\alpha}$ into $\mathcal{D}_{\beta}$.\\
\end{lemma}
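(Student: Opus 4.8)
The plan is to follow the template of the proof of Lemma $2.3$, combining the stability of compactness under operator-norm limits with a splitting of $\mu$ near the boundary point $1$.

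For the implication $(ii)\Rightarrow(i)$ I would reuse the test functions from the proof of Lemma $2.3$: for $0<t<1$ put $f_t(z)=(1-t^2)^{1-\frac{\alpha}{2}}\sum_{n=0}^{\infty}t^nz^n$, so that $\|f_t\|_{\mathcal{D}_{\alpha}}\asymp1$. Since $\alpha<2$, the normalising factor $(1-t^2)^{1-\alpha/2}\to0$ as $t\to1^-$, whence $f_t\to0$ uniformly on compact subsets of $\mathbb{D}$; being norm-bounded, $f_t\to0$ weakly in $\mathcal{D}_{\alpha}$. Compactness of $\mathcal{H}_{\mu,\gamma}\colon\mathcal{D}_{\alpha}\to\mathcal{D}_{\beta}$ then forces $\|\mathcal{H}_{\mu,\gamma}(f_t)\|_{\mathcal{D}_{\beta}}\to0$, and running verbatim the chain of lower bounds in the proof of Lemma $2.3$ gives
\[
\frac{\mu([t,1))^2}{(1-t^2)^{2\gamma-\beta+\alpha}}\preceq\|\mathcal{H}_{\mu,\gamma}(f_t)\|_{\mathcal{D}_{\beta}}^2\longrightarrow0,
\]
which says exactly that $\mu$ is a vanishing $(\gamma-\frac{\beta-\alpha}{2})$-Carleson measure.

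For $(i)\Rightarrow(ii)$, set $s=\gamma-\frac{\beta-\alpha}{2}$ and, for $0<b<1$, split $\mu=\mu^{(b)}+\nu^{(b)}$ with $\mu^{(b)}=\mu|_{[0,b)}$ and $\nu^{(b)}=\mu|_{[b,1)}$, so that $\mathcal{H}_{\mu,\gamma}=\mathcal{H}_{\mu^{(b)},\gamma}+\mathcal{H}_{\nu^{(b)},\gamma}$. The tail $\nu^{(b)}$ is an $s$-Carleson measure with small constant, $\sup_{t\in[0,1)}\frac{\nu^{(b)}([t,1))}{(1-t)^s}\le\sup_{t\ge b}\frac{\mu([t,1))}{(1-t)^s}=:\varepsilon(b)$, and $\varepsilon(b)\to0$ as $b\to1^-$ by the vanishing hypothesis. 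By Theorem $2.1$ of \cite{author7} this yields $\nu^{(b)}_n\preceq\varepsilon(b)\,n^{-s}$, and feeding this into the Hilbert-inequality estimate of Lemma $2.3$ (which is linear in the bound on the moments, the kernel $K(x,y)$ and hence the Beta-function constant being unchanged) gives $\|\mathcal{H}_{\nu^{(b)},\gamma}\|_{\mathcal{D}_{\alpha}\to\mathcal{D}_{\beta}}\preceq\varepsilon(b)$. Thus it suffices to show that for each fixed $b<1$ the operator $\mathcal{H}_{\mu^{(b)},\gamma}$ is compact: then $\mathcal{H}_{\mu,\gamma}=\lim_{b\to1^-}\mathcal{H}_{\mu^{(b)},\gamma}$ in operator norm is a norm limit of compact operators, hence compact. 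For this, note that $\mu^{(b)}$ is supported on $[0,b]$, so its moments decay geometrically, $\mu^{(b)}_m\le b^m\mu([0,1))$; using the orthogonal bases $\{(n+1)^{(\alpha-1)/2}z^n\}$ of $\mathcal{D}_{\alpha}$ and $\{(n+1)^{(\beta-1)/2}z^n\}$ of $\mathcal{D}_{\beta}$ together with $\Gamma(n+\gamma)/(n!\,\Gamma(\gamma))\asymp(n+1)^{\gamma-1}$, the Hilbert–Schmidt norm of $\mathcal{H}_{\mu^{(b)},\gamma}$ is comparable to
\[
\sum_{k=0}^{\infty}(k+1)^{\alpha-1}\sum_{n=0}^{\infty}(n+1)^{2\gamma-\beta-1}\big(\mu^{(b)}_{n+k}\big)^2\preceq\mu([0,1))^2\sum_{k=0}^{\infty}(k+1)^{\alpha-1}\sum_{n=0}^{\infty}(n+1)^{2\gamma-\beta-1}b^{2(n+k)}<\infty,
\]
since the geometric factor $b^{2(n+k)}$ dominates every polynomial weight; hence $\mathcal{H}_{\mu^{(b)},\gamma}$ is Hilbert–Schmidt, in particular compact.

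The main obstacle I anticipate is the uniformity built into the splitting: one must be careful that the implicit constant in $\|\mathcal{H}_{\nu^{(b)},\gamma}\|\preceq\varepsilon(b)$ is independent of $b$ — this is exactly what makes $\mathcal{H}_{\mu,\gamma}$ a genuine norm limit of the compact operators $\mathcal{H}_{\mu^{(b)},\gamma}$ — which in turn hinges on the fact that the application of Lemma $2.2$ uses the same kernel $K(x,y)$ for every tail measure $\nu^{(b)}$. The remaining ingredients (geometric decay beating the polynomial weights for the Hilbert–Schmidt bound, and $\varepsilon(b)\to0$) are routine under the stated ranges $0<\alpha<2$, $2\le\beta<4$, $\gamma\ge1$; a mild care is also needed at the endpoint $\alpha=2$, where the test functions $f_t$ must be renormalised, but the argument is otherwise identical.
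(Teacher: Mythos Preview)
Your argument is correct. The necessity direction $(ii)\Rightarrow(i)$ is exactly the paper's: the same test functions $f_t$, weak convergence to $0$, and the lower bound from Lemma~2.3.

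For the sufficiency $(i)\Rightarrow(ii)$ you take a genuinely different route. The paper works on the $H^2$ side with $S_{\mu,\gamma}=V_\beta\circ\mathcal H_{\mu,\gamma}\circ V_\alpha^{-1}$ and approximates it by the \emph{finite-rank} truncations $S_{\mu,\gamma,m}$ obtained by cutting the output index at $m$; the key input is the moment characterisation of vanishing Carleson measures (Theorem~2.4 of \cite{author7}), namely $\mu_n=o(n^{-s})$, which makes $\|S_{\mu,\gamma}-S_{\mu,\gamma,m}\|\to0$ via the same Hilbert-type inequality of Lemma~2.2. You instead split the \emph{measure}, $\mu=\mu^{(b)}+\nu^{(b)}$, show that the compactly supported piece gives a Hilbert--Schmidt operator (geometric moment decay beats all polynomial weights), and that the tail piece has small operator norm because its $s$-Carleson constant is $\varepsilon(b)\to0$; here you only need the quantitative form of Theorem~2.1 of \cite{author7}, not the ``little-$o$'' version. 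Both approaches realise $\mathcal H_{\mu,\gamma}$ as a norm limit of compact operators through Lemma~2.2; the paper's truncation is slightly shorter, while your measure-splitting is more intrinsic to $\mu$ and makes the dependence on the Carleson constant explicit. Your caveat about the uniformity of the implied constant in $\|\mathcal H_{\nu^{(b)},\gamma}\|\preceq\varepsilon(b)$ is well placed and is indeed satisfied, since the kernel $K$ in Lemma~2.2 does not depend on $b$ and the moment bound $\nu^{(b)}_n\preceq\varepsilon(b)\,n^{-s}$ carries a constant depending only on $s$.
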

\begin{proof}
$(i)\Rightarrow(ii)$. For any $f(z)=\sum_{n=0}^{\infty}a_nz^n\in\mathcal{D}_{\alpha}$. We define $S_{\mu,\gamma,m}$ as
$$S_{\mu,\gamma,m}(f)(z)=\sum_{n=0}^{m}\left(\sum_{k=0}^{\infty}(n+1)^{\frac{1-\beta}{2}}(k+1)^{\frac{\alpha-1}{2}}\mu_{n+k}a_k\right)\frac{\Gamma(n+\gamma)}{n!\Gamma(\gamma)}z^n.$$
Since $S_{\mu,\gamma,m}$ is finite rank operators, it is compact on $H^2$. Because $\mu$ is a vanishing $(\gamma-\frac{\beta-\alpha}{2})$-Carleson measure, applying Theorem $2.4$ of \cite{author7}, we have $$\mu_n=o\left(n^{-(2-\frac{\beta-\alpha}{2})}\right).\eqno{(2.4)}$$ For any $\epsilon>0$, there exists an $N>0$ such that when $m>N$, we have
$$|\mu_m|<\epsilon n^{-(2-\frac{\beta-\alpha}{2})}\eqno{(2.5)}.$$
Since
$$(S_{\mu,\gamma}-S_{\mu,\gamma,m})(f)(z)=\sum_{n=m+1}^{\infty}\left(\sum_{k=0}^{\infty}(n+1)^{\frac{1-\beta}{2}}(k+1)^{\frac{\alpha-1}{2}}\mu_{n+k}a_k\right)\frac{\Gamma(n+\gamma)}{n!\Gamma(\gamma)}z^n,$$
together with $(2.2)$ and $(2.5)$
we have

\begin{align*}
\|(S_{\mu,\gamma}-S_{\mu,\gamma,m})(f)(z)\|_{H^2}^{2}&\asymp\sum_{n=m+1}^{\infty}\left|\sum_{k=0}^{\infty}(n+1)^{\frac{2\gamma-\beta-1}{2}}(k+1)^{\frac{\alpha-1}{2}}\mu_{n,k}a_k\right|^2\\
&\preceq \epsilon^2\sum_{n=1}^{\infty}\left(\sum_{k=1}^{\infty}n^{\frac{2\gamma-\beta-1}{2}}k^{\frac{\alpha-1}{2}}\frac{|a_k|}{(n+k)^{\gamma-\frac{\beta-\alpha}{2}}}\right)^2\\
\end{align*}
From $(2.3)$, we obtain
$$\|(S_{\mu,\gamma}-S_{\mu,\gamma,m})(f)(z)\|_{H^2}^{2}\preceq\epsilon^2\|f\|_{H^2}^2.$$
Hence,
$$\|S_{\mu,\gamma}-S_{\mu,\gamma,m}\|_{H^2\to H^2}\preceq\epsilon.$$
 Which implies that $S_{\mu,\gamma}$ is compact on $H^2$, due to $$\mathcal{H}_{\mu,\gamma}=V_{\beta}^{-1}\circ S_{\mu,\gamma}\circ V_{\alpha},$$
  Since $V_{\beta}^{-1}$ and $V_{\alpha}$  are bounded,  $\mathcal{H}_{\mu,\gamma}$ is compact from $\mathcal{D}_{\alpha}$ into $\mathcal{D}_{\beta}$.\\
$(ii)\Rightarrow(i)$. Suppose that $\mathcal{H}_{\mu,\gamma}$ is compact from $\mathcal{D}_{\alpha}$ into $\mathcal{D}_{\beta}$.
For $0<t<1$, let $f_t(z)=(1-t^2)^{1-\frac{\alpha}{2}}\sum_{n=0}^{\infty}t^nz^n$, we have
$$\|f_t\|_{\mathcal{D}_{\alpha}}^2=(1-t^2)^{2-\alpha}\sum_{n=0}^{\infty}{(n+1)}^{1-\alpha}t^{2n}\asymp 1,$$
Since $\lim_{t\to 1}f_t(z)=0$ for any $z\in \mathbb{D}$, $f_t$ is convergent weakly to 0 in $\mathcal{D}_{\alpha}$ as $t\to 1$.
We have $$\lim_{t\to 1}\|\mathcal{H}_{\mu,\gamma}(f_t)\|_{{\mathcal D}_{\beta}}=0.$$
Similar to the proof of Lemma $2.3$, we have $$\mu([t,1)\preceq(1-t)^{\gamma-\frac{\beta-\alpha}{2}}\|\mathcal{H}_{\mu,\gamma}(f_t)\|_{\mathcal{D}_{\beta}}.$$
Hence
$$\lim_{t\to 1}\frac{\mu([t,1))}{\ \ \ \ \ \ \ (1-t)^{\gamma-\frac{\beta-\alpha}{2}}}=0,$$
$\mu$ is a vanishing $(\gamma-\frac{\beta-\alpha}{2})$-Carleson measure.

\end{proof}
\section*{Acknowledgments }
The first author was partially supported by Guangdong Basic and Applied Basic Research Foundation
(No. 2020B1515310001).

\end{document}